\newtheorem{theorem}{Theorem}
\newtheorem{corollary}[theorem]{Corollary}
\newtheorem{definition}[theorem]{Definition}
\newtheorem{example}[theorem]{Example}
\newtheorem{lemma}[theorem]{Lemma}
\newtheorem{proposition}[theorem]{Proposition}
\begin{document}
\hyphenpenalty=10000

\begin{center}
{\Large \textbf{A Kazhdan group with an infinite outer automorphism group }}\\[5mm]
{\large {Traian Preda  }\\[10mm]
}
\end{center}

\textbf{Abstract}. D. Kazhdan has introduced in 1967 the Property (T) for local
compact groups (see \cite{kaz}). In this article we prove that for $n \geq 3$ and $ m \in \mathbb{N}$ the group
$SL_n (\textbf{K})\ltimes\mathcal{M}_{n,m}(\textbf{K})$ is a Kazhdan group having the outer automorphism group infinite.

\footnote{\textsf{2010 Mathematics Subject Classification:} 22D10;22D45} 
\footnote{\textsf{Keywords:} Representations of topological groups;Kazhdan Property (T); Mautner's lemma; } 
 
\afterpage{
\fancyhead{} \fancyfoot{} 
\fancyhead[LE, RO]{\bf\thepage}
\fancyhead[LO]{\small A Kazhdan group with an infinite outer automorphism group}
\fancyhead[RE]{\small Traian Preda  }}

\begin{definition}(\cite{bhv})
Let $(\pi,\mathcal{H})$ be a unitary representation of a topological group G.

(i) For a subset Q of G and real number $\varepsilon > 0$, a vector $\xi \in \mathcal{H}$ is $(Q, \varepsilon)$-invariant if : 
\begin{center}
$sup_{x\in Q}||\pi (x)\xi - \xi || < \varepsilon ||\xi||.$
\end{center}

(ii) The representation $(\pi,\mathcal{H})$ almost has invariant vectors if it has 
$(Q, \varepsilon)$ - invariant vectors  for every compact subset Q of G and every
$\varepsilon > 0$. If this holds, we write  $1_G  \prec \pi$.

(iii) The representation $(\pi,\mathcal{H})$ has non - zero invariant  vectors  if there exists 
$\xi \neq 0$ in $\mathcal{H}$ such that $\pi(x)\xi = \xi$ for all g$\in$G. If this holds, we
write $1_G  \subset \pi$.
\end{definition}

\begin{definition}(\cite{kaz})
Let G be a topological group.

G has Kazhdan's Property (T), or is a Kazhdan group, if there exists a compact subset Q 
of G and $\varepsilon > 0$ such that, whenever a unitary representation $\pi$ of G has a
$(Q, \varepsilon)$ - invariant vector, then $\pi$ has a non-zero invariant vector.
\end{definition}

\begin{proposition}(\cite{bhv})
Let G be a topological group.The following statements are equivalent:

(i) G has Kazhdan's Property(T);

(ii) whenever a unitary representation  $(\pi ,\mathcal{H})$ of G   weakly contains $1_G$, it
contains $1_G$ ( in symbols: $1_G \prec \pi$ implies $1_G \subset \pi$ ). 
\end{proposition}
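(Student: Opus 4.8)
The plan is to prove the two implications separately: the forward direction (i) $\Rightarrow$ (ii) is essentially immediate from the definitions, while the reverse direction (ii) $\Rightarrow$ (i) is where the real work lies and calls for a direct-sum construction.

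First I would treat (i) $\Rightarrow$ (ii). Assume $G$ has Property (T) and fix a Kazhdan pair, i.e.\ a compact subset $Q_0 \subseteq G$ and $\varepsilon_0 > 0$ as furnished by the definition of Property (T). Suppose $(\pi, \mathcal{H})$ is a unitary representation with $1_G \prec \pi$. By the definition of almost having invariant vectors, $\pi$ has $(Q, \varepsilon)$-invariant vectors for \emph{every} compact $Q$ and \emph{every} $\varepsilon > 0$; applying this to the particular pair $(Q_0, \varepsilon_0)$ yields a $(Q_0, \varepsilon_0)$-invariant vector. Property (T) then forces $\pi$ to have a non-zero invariant vector, that is, $1_G \subset \pi$. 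This direction needs nothing beyond unwinding the definitions.

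The substance is in (ii) $\Rightarrow$ (i), which I would prove by contraposition. Suppose $G$ does not have Property (T). Then for every pair $(Q, \varepsilon)$ consisting of a compact set $Q \subseteq G$ and a number $\varepsilon > 0$, the failure of the Kazhdan condition for this pair produces a unitary representation $(\pi_{Q,\varepsilon}, \mathcal{H}_{Q,\varepsilon})$ that admits a $(Q, \varepsilon)$-invariant vector but has no non-zero invariant vector. Using the axiom of choice I would select one such representation for each pair---noting that the compact subsets of $G$ form a set and $\varepsilon$ ranges over $\mathbb{R}_{>0}$, so the index set is a genuine set---and form the Hilbert-space direct sum
$$ \pi = \bigoplus_{(Q,\varepsilon)} \pi_{Q,\varepsilon}, \qquad \mathcal{H} = \bigoplus_{(Q,\varepsilon)} \mathcal{H}_{Q,\varepsilon}. $$
It then remains to verify two properties of $\pi$. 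For $1_G \prec \pi$: given any compact $Q_0$ and $\varepsilon_0 > 0$, the summand indexed by $(Q_0, \varepsilon_0)$ contains a $(Q_0, \varepsilon_0)$-invariant vector, and its image under the canonical isometric, $G$-equivariant inclusion into $\mathcal{H}$ is again $(Q_0, \varepsilon_0)$-invariant; hence $\pi$ has $(Q_0, \varepsilon_0)$-invariant vectors for every such pair, which is exactly $1_G \prec \pi$. For $1_G \not\subset \pi$: since $G$ acts diagonally, a vector $\xi = (\xi_{Q,\varepsilon}) \in \mathcal{H}$ is $\pi$-invariant if and only if each coordinate $\xi_{Q,\varepsilon}$ is $\pi_{Q,\varepsilon}$-invariant; as each summand has no non-zero invariant vector, every coordinate vanishes and $\xi = 0$. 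Thus $\pi$ almost has invariant vectors yet has none, contradicting (ii), and the contrapositive is established.

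I expect the main obstacle to be the (ii) $\Rightarrow$ (i) direction---not in any single estimate, but in assembling the correct \emph{universal} representation and checking that the two defining properties survive the passage to the direct sum. The points to get right are that almost-invariance only demands a (possibly different) good vector for each pair $(Q, \varepsilon)$, which is precisely what the individual summands supply, and that invariance in a direct sum is coordinatewise, which kills all invariant vectors at once. One should also keep the set-theoretic bookkeeping honest: the index set is a set, so the direct sum is a legitimate (though possibly non-separable) Hilbert space and no proper-class difficulties arise.
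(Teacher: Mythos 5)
The paper states this proposition without proof, citing \cite{bhv}, so there is no internal argument to compare against; your proof is correct and is precisely the standard one found in that reference. Both directions check out: (i) $\Rightarrow$ (ii) is the definitional unwinding you describe, and your contrapositive for (ii) $\Rightarrow$ (i) via the direct sum $\bigoplus_{(Q,\varepsilon)}\pi_{Q,\varepsilon}$ --- with almost-invariance witnessed summand by summand, invariance killed coordinatewise, and the index set legitimately a set --- is exactly how the equivalence is established in the cited monograph.
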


\begin{definition}
Let \textbf{K} be a field. An absolute value on \textbf{K} is a real - valued function 
$x\to |x|$ such that, for all x and y in \textbf{K}:

(i)  $ |x| \geq 0$ and $|x| = 0  \Leftrightarrow x = 0$

(ii)$ |xy| = |x||y| $

(iii)$ | x + y | \leq |x|+|y| $.

An absolute value defines a topology on \textbf{K} given by the metric
\begin{center}
d(x, y) =$ | x - y|$.
\end{center}
\end{definition}

\begin{definition}
A field \textbf{K} is a local field if \textbf{K} can be equipped with an absolute value for
which \textbf{K} is locally compact and not discrete.
\end{definition}

\begin{example}
\textbf{K} = $ \mathbb{R}$ and \textbf{K} = $ \mathbb{C}$ with the usual absolute value are local fields.
\end{example}

\begin{example} (\cite{bhv} and \cite{hv}) Groups with Property (T):

a) Compact groups, $SL_n(\mathbb{Z} )$ for $n\geq 3$.

b) $SL_n(\textbf{K} )$ for $n\geq 3$ and \textbf{K} a local field.
\end{example}

\begin{lemma} ( Mautner's lemma)(\cite{bhv})

Let G be a topological group, and let $(\pi,\mathcal{H})$ be a unitary representation of G.
Let $x \in G$ and assume that there exists a net $(y_i)_i$ in G such that $\displaystyle \lim_{i}  y_ixy_i^{-1} = e$.
If $\xi$ is a vector in $\mathcal{H}$ which is fixed by $y_i$ for all i, then $\xi$ is fixed by x.
\end{lemma}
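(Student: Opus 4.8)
The plan is to prove $\pi(x)\xi = \xi$ directly, by showing that the number $\|\pi(x)\xi - \xi\|$ equals zero. The guiding idea is that conjugating $x$ by $y_i$ leaves this number unchanged --- because $\xi$ is fixed by $y_i$ and each $\pi(y_i)$ is unitary --- while conjugation drives $x$ toward the identity, forcing the number to be arbitrarily small.

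First I would record the two elementary facts I will use. Since $\xi$ is fixed by $y_i$ we have $\pi(y_i)\xi = \xi$, and applying $\pi(y_i)^{-1} = \pi(y_i^{-1})$ gives $\pi(y_i^{-1})\xi = \xi$ as well. Since $\pi$ is a unitary representation, each $\pi(y_i)$ is an isometry of $\mathcal{H}$. Using these, I would compute
\begin{align*}
\|\pi(x)\xi - \xi\|
&= \|\pi(y_i)\big(\pi(x)\xi - \xi\big)\|
= \|\pi(y_i x)\xi - \pi(y_i)\xi\| \\
&= \|\pi(y_i x)\xi - \xi\|
= \|\pi(y_i x y_i^{-1})\pi(y_i)\xi - \xi\|
= \|\pi(y_i x y_i^{-1})\xi - \xi\|,
\end{align*}
where the first equality is the isometry of $\pi(y_i)$, the factorization $y_i x = (y_i x y_i^{-1})\,y_i$ is used to pass to the fourth expression, and $\pi(y_i)\xi = \xi$ is used twice. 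The point is that this identity holds for \emph{every} index $i$.

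Finally I would pass to the limit along the net. By hypothesis $y_i x y_i^{-1} \to e$, so by the strong continuity of the unitary representation the net $\pi(y_i x y_i^{-1})\xi$ converges to $\pi(e)\xi = \xi$; hence the right-hand side above tends to $0$. Since the left-hand side $\|\pi(x)\xi - \xi\|$ does not depend on $i$, it must vanish, giving $\pi(x)\xi = \xi$, as desired. The only step demanding care is this last one: one must invoke the continuity of the orbit map $g \mapsto \pi(g)\xi$ that is built into the definition of a unitary representation of a topological group, and one must phrase the convergence argument for nets rather than sequences. Everything else is immediate algebra from unitarity together with the fixed-vector hypothesis.
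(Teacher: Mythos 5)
Your proof is correct and complete: the chain of equalities $\|\pi(x)\xi - \xi\| = \|\pi(y_i x y_i^{-1})\xi - \xi\|$, obtained from unitarity and $\pi(y_i)\xi = \xi$, followed by strong continuity along the net, is precisely the standard argument. The paper itself states the lemma without proof, citing \cite{bhv}, and your argument is essentially the one given there, including the care you rightly note about invoking the strong (orbit-map) continuity built into the definition of a unitary representation and working with nets rather than sequences.
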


\begin{theorem}
Let \textbf{K} be a local field. The group $SL_n(\textbf{K})$ acts on $\mathcal{M}_{n,m}(\textbf{K})$ by left multiplication $(g, A) \to gA$, $g \in SL_n(\textbf{K})$  and  $A \in 
\mathcal{M}_{n,m}(\textbf{K})$.

Then the semi - direct product $SL_n(\textbf{K}) \ltimes \mathcal{M}_{n,m}(\textbf{K})$ has
Property (T) for $(\forall) n \geq 3$  and  $(\forall) m \in \mathbb{N}$.
\end{theorem}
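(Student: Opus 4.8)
The plan is to use the characterization of Property (T) from the Proposition: it suffices to show that every unitary representation $(\pi,\mathcal{H})$ of $G = SL_n(\textbf{K}) \ltimes \mathcal{M}_{n,m}(\textbf{K})$ with $1_G \prec \pi$ in fact satisfies $1_G \subset \pi$. I would exploit the two pieces of structure separately: the quotient $G/\mathcal{M}_{n,m}(\textbf{K}) \cong SL_n(\textbf{K})$, which has Property (T) for $n \geq 3$ by the Example, and the normal abelian subgroup $N := \mathcal{M}_{n,m}(\textbf{K})$, whose invariance I would extract from $SL_n$-invariance using Mautner's lemma.

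First, assume $1_G \prec \pi$. Since $H := SL_n(\textbf{K})$ is a closed subgroup, every compact $Q \subset H$ is compact in $G$, so a $(Q,\varepsilon)$-invariant vector for $\pi$ is one for $\pi|_H$; hence $1_H \prec \pi|_H$. Because $H$ has Property (T) for $n \geq 3$, the Proposition gives $1_H \subset \pi|_H$, i.e. the closed subspace $\mathcal{H}^H$ of $H$-fixed vectors is nonzero. Fix $0 \neq \xi \in \mathcal{H}^H$; it then remains only to show that $\xi$ is fixed by all of $N$, for then it is fixed by $G = H \cdot N$ and we are done.

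The key step is a Mautner argument applied entry by entry. Writing elements of $G$ as pairs $(h,v)$ with product $(h_1,v_1)(h_2,v_2) = (h_1 h_2,\, v_1 + h_1 v_2)$, one computes $(h,0)^{-1} = (h^{-1},0)$ and $(h,0)(e,v)(h,0)^{-1} = (e, h v)$, so conjugation by $H$ realises the left-multiplication action on $N$. Since $\textbf{K}$ is non-discrete there is $t \in \textbf{K}$ with $0 < |t| < 1$. Let $E_{ij}$ denote the matrix with $1$ in entry $(i,j)$ and $0$ elsewhere. For fixed indices $i,j$ and a position $p \neq i$ (available because $n \geq 2$), let $d_k \in SL_n(\textbf{K})$ be the diagonal matrix with $t^k$ in slot $i$, $t^{-k}$ in slot $p$, and $1$ elsewhere, so $\det d_k = 1$. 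Then for any $s \in \textbf{K}$,
\begin{center}
$(d_k,0)(e, s E_{ij})(d_k,0)^{-1} = (e,\, d_k \cdot s E_{ij}) = (e,\, t^k s E_{ij}) \longrightarrow (e,0) = e.$
\end{center}
As $\xi$ is fixed by every $d_k \in H$, Mautner's lemma forces $\pi(e, s E_{ij})\xi = \xi$. Letting $i,j,s$ vary and using that every $v \in \mathcal{M}_{n,m}(\textbf{K})$ is a finite sum $\sum_{i,j} v_{ij} E_{ij}$, together with the homomorphism property of $\pi$ on the abelian group $N$, I conclude $\pi(e,v)\xi = \xi$ for all $v$. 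Hence $\xi$ is $G$-invariant, giving $1_G \subset \pi$.

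The main obstacle is really locating where the hypotheses are used rather than any hard estimate. The Mautner contraction above needs only $n \geq 2$ and the non-discreteness of $\textbf{K}$; the genuine constraint $n \geq 3$ enters solely through the requirement that $SL_n(\textbf{K})$ itself have Property (T), which fails for $n = 2$. I would also double-check the passage from weak containment for $G$ to weak containment for the closed subgroup $H$, verify that the diagonal contractions $d_k$ indeed lie in $SL_n(\textbf{K})$, and confirm that fixing each one-parameter subgroup $\{(e, sE_{ij}) : s \in \textbf{K}\}$ propagates to all of $N$ by additivity.
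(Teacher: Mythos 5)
Your proof is correct and takes essentially the same route as the paper: restrict $\pi$ to $SL_n(\textbf{K})$, use its Property (T) for $n \geq 3$ to obtain a nonzero $SL_n(\textbf{K})$-fixed vector, then contract each elementary translation $(e, sE_{ij})$ to the identity by conjugating with diagonal elements of $SL_n(\textbf{K})$ and conclude via Mautner's lemma together with the additive generation of $\mathcal{M}_{n,m}(\textbf{K})$. Your diagonal matrices $d_k$ (with $t^k$ in slot $i$ and $t^{-k}$ in a slot $p \neq i$) are simply a cleaner formulation of the paper's matrices $A_i$, which play the identical role.
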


\begin{proof}
Let $(\pi,\mathcal{H})$ be a unitary representation of G = $SL_n(\textbf{K}) \ltimes \mathcal{M}_{n,m}(\textbf{K})$ almost having invariant vectors. Since $SL_n(\textbf{K})$ has
Property (T), there exists a non - zero vector $\xi \in \mathcal{H} $ which is 
$SL_n (\textbf{K})$ - invariant.

Since $\textbf{K}$ is non - discret, there exists a net $(\lambda _i)_i$ in $\textbf{K}$ with
$\lambda _i \neq 0$ and such that $\displaystyle \lim_{i}  \lambda _i = 0$.

Let $\Delta_{pq} (x) \in \mathcal{M}_{n,m}(\textbf{K})$ the matrix with $x$ as (p,q) - entry and 
0 elsewhere and  $ (A_i)_{\alpha \beta} \in SL_n(\textbf{K})$ the matrix:
 \begin{equation} (A_i)_{\alpha , \beta} = 
\begin {cases}
 \lambda_i  &   if ~\alpha = \beta~  and ~\alpha= p \\[3mm]
\lambda_i^{-1} &  if ~\alpha = \beta~  and ~\alpha= (p+1)mod(n+1)+[p/n] \\[3mm]
1 & if ~\alpha = \beta~  and ~\alpha \notin \{p, (p+1)mod(n+1)+[p/n] \}\\[3mm]
0 & if ~\alpha \neq \beta
\end{cases}
\end{equation}
$\Rightarrow$ $A_i \Delta_{pq}(x) = \delta_{pq}(\lambda_i x)$, where
$\delta_{pq}(\lambda_ix)\in \mathcal{M}_{n,m}(\textbf{K})$ is the matrix with 
$\lambda_ix$ as (p,~q) - entry and 0 elsewhere.

Then $\displaystyle\lim_i A_i\Delta_{pq}(x) = 0_{n,m}$.

Since in G we have
\begin{center}
$(A_i, 0_{n,m})(I_n, \Delta_{pq}(x))( A_i , 0_{n,m})^{-1} = ( I_n , A_i \Delta_{pq}(x))$
\end{center}
and since $\xi \in \mathcal{H}$ is $( A_i , 0_{n,m})$ - invariant $\Rightarrow$

$\Rightarrow$ from Mautner's Lemma that $\xi$ is $\Delta_{pq}(x)$ - invariant.

Since $\Delta_{pq}(x)$ generates the group $ \mathcal{M}_{n,m}(\textbf{K})$ $\Rightarrow$
$\xi$ is G - invariant and G has Property (T).
\end{proof}

\begin{corollary}
The groups $SL_n(\textbf{K})\ltimes\textbf{K}^n$ and 
$SL_n(\mathbb{R}) \ltimes \mathcal{M}_n(\mathbb{R})$ has Property (T), 
$(\forall )n\geq 3$.
\end{corollary}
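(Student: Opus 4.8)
The plan is to derive both assertions as immediate special cases of the preceding Theorem, so that essentially no new argument is required beyond correctly identifying the parameters $m$ and the field $\textbf{K}$ in each instance. No separate invocation of Mautner's lemma or Property (T) of $SL_n(\textbf{K})$ is needed, since all of that work has already been carried out in the proof of the Theorem.

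First I would treat the group $SL_n(\textbf{K}) \ltimes \textbf{K}^n$. The key observation is that the vector space $\textbf{K}^n$, written as column vectors, is precisely $\mathcal{M}_{n,1}(\textbf{K})$, and that the natural linear action of $SL_n(\textbf{K})$ on $\textbf{K}^n$ given by $(g, v) \mapsto gv$ coincides with the left-multiplication action $(g, A) \mapsto gA$ on $\mathcal{M}_{n,1}(\textbf{K})$ that appears in the Theorem. Thus $SL_n(\textbf{K}) \ltimes \textbf{K}^n$ is literally the group $SL_n(\textbf{K}) \ltimes \mathcal{M}_{n,m}(\textbf{K})$ in the case $m = 1$. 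Since $\textbf{K}$ is a local field and $n \geq 3$, the Theorem applies directly and yields Property (T).

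Next I would treat $SL_n(\mathbb{R}) \ltimes \mathcal{M}_n(\mathbb{R})$. Here $\mathcal{M}_n(\mathbb{R})$ means $\mathcal{M}_{n,n}(\mathbb{R})$, so this is exactly the case $\textbf{K} = \mathbb{R}$ and $m = n$. Because $\mathbb{R}$ is a local field (as recorded in the earlier Example) and the hypothesis $n \geq 3$ simultaneously guarantees $n \geq 3$ and $m = n \in \mathbb{N}$, all hypotheses of the Theorem are satisfied, and Property (T) follows once more.

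I do not anticipate any genuine obstacle here, as the corollary is purely a matter of specialization. The only points that warrant a moment's care are verifying that the two actions genuinely coincide under the identification $\textbf{K}^n \cong \mathcal{M}_{n,1}(\textbf{K})$, and noting that the first assertion tacitly presumes $\textbf{K}$ to be a local field (without which the Theorem could not be applied). Both are routine, so the entire proof reduces to citing the Theorem twice with the appropriate choices of $m$ and $\textbf{K}$.
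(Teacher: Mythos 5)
Your proposal is correct and matches exactly how the paper intends the corollary to be read: it is an immediate specialization of the Theorem, taking $m=1$ under the identification $\textbf{K}^n \cong \mathcal{M}_{n,1}(\textbf{K})$ for the first group and $\textbf{K}=\mathbb{R}$, $m=n$ for the second, which is why the paper supplies no separate proof. Your remark that the first assertion tacitly presumes $\textbf{K}$ is a local field is a fair and accurate observation about the paper's statement.
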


\begin{proposition}
For $\delta \in SL_n(\mathbb{Z})$, let $ S_{\delta}:\Gamma \to \Gamma $,
$S_{\delta}((\alpha, A)) = (\alpha, A\delta), (\forall ) (\alpha,A)\in\Gamma.$
Then: 

a) $S_{\delta}\in Aut(\Gamma).$

b) $\Phi : SL_n(\mathbb{Z}) \to Aut(\Gamma)$ , $\Phi(\delta ) = S_{\delta}$ is a group 
homomorphism.

c)$S_{\delta}\in Int (\Gamma)$ if and only if $\delta \in \{ \pm I\}$. In particular, the outer automorphism of $\Gamma$ is infinit.
\end{proposition}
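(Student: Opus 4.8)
The plan is to do everything from the multiplication law of the semidirect product, which I will take to be $(\alpha,A)(\beta,B)=(\alpha\beta,\,A+\alpha B)$, where $\alpha,\beta\in SL_n(\mathbb{Z})$ act on the additive matrix group by left multiplication. Granting this, parts (a) and (b) are direct computations. For (a) I would expand both $S_{\delta}\big((\alpha,A)(\beta,B)\big)$ and $S_{\delta}(\alpha,A)\,S_{\delta}(\beta,B)$ and check that they coincide, the one substantive point being that right multiplication by $\delta$ commutes with the left action of $\alpha$ (i.e. $(\alpha B)\delta=\alpha(B\delta)$); bijectivity is then immediate, since $S_{\delta^{-1}}$ is a two-sided inverse, so $S_{\delta}\in\mathrm{Aut}(\Gamma)$. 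Part (b) follows the same way: composing $S_{\delta_1}$ and $S_{\delta_2}$ and reading off the effect on the matrix coordinate shows $\Phi$ respects the group operation.

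The real content is (c), and the whole argument hinges on an explicit conjugation formula. First I would record that $(\gamma,C)^{-1}=(\gamma^{-1},-\gamma^{-1}C)$, and hence that conjugation by $(\gamma,C)$ sends $(\alpha,A)$ to $\big(\gamma\alpha\gamma^{-1},\ \gamma A+C-\gamma\alpha\gamma^{-1}C\big)$. The easy ("if") direction is then transparent: if $\delta=\pm I$, then $\delta$ is scalar and central, so conjugation by $(\delta,0)$ sends $(\alpha,A)$ to $(\alpha,\delta A)=(\alpha,A\delta)=S_{\delta}(\alpha,A)$, exhibiting $S_{\delta}$ as inner.

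For the hard ("only if") direction I would assume $S_{\delta}$ equals conjugation by some $(\gamma,C)$ and match the two coordinates for all $(\alpha,A)$. Comparing the $SL_n(\mathbb{Z})$-coordinate forces $\gamma\alpha\gamma^{-1}=\alpha$ for every $\alpha$, so $\gamma$ is central in $SL_n(\mathbb{Z})$, i.e. $\gamma=\pm I$. The matrix coordinate then reads $\gamma A+C-\alpha C=A\delta$ for all $\alpha,A$; setting $A=0$ gives $\alpha C=C$ for every $\alpha$, and since the left action of $SL_n(\mathbb{Z})$ on $\mathbb{Z}^n$ (applied columnwise) has no nonzero fixed vector for $n\geq 2$, this forces $C=0$. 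The identity collapses to $\gamma A=A\delta$ for all $A$, whence $\delta=\gamma=\pm I$.

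I expect this last direction to be the main obstacle, since it is where one must invoke two structural facts about $SL_n(\mathbb{Z})$ cleanly: that its center is $\{\pm I\}$, and that the only $SL_n(\mathbb{Z})$-fixed matrix under left multiplication is $0$. Both hold for $n\geq 2$ and can be obtained from the elementary matrices $I+E_{ij}$. Finally, for the concluding assertion I would consider the composite $SL_n(\mathbb{Z})\to\mathrm{Aut}(\Gamma)\to\mathrm{Out}(\Gamma)$, where the first map is $\Phi$ and the second the canonical projection. By (c) its kernel is exactly $\{\pm I\}$, so its image is isomorphic to $SL_n(\mathbb{Z})/\{\pm I\}$, which is infinite; therefore $\mathrm{Out}(\Gamma)$ is infinite.
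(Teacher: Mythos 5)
Your proposal is correct and follows essentially the same route as the paper: the same semidirect-product multiplication law, the same conjugation formula $(\gamma,C)(\alpha,A)(\gamma,C)^{-1}=\bigl(\gamma\alpha\gamma^{-1},\,\gamma A+C-\gamma\alpha\gamma^{-1}C\bigr)$, and the same two structural inputs (the center of $SL_n(\mathbb{Z})$ is $\{\pm I\}$, and $\alpha C=C$ for all $\alpha$ forces $C=0$). If anything you are slightly more complete than the paper, which omits the easy ``if'' direction of (c) and leaves implicit your final step that the composite $SL_n(\mathbb{Z})\to \mathrm{Aut}(\Gamma)\to \mathrm{Out}(\Gamma)$ has kernel $\{\pm I\}$ and hence infinite image.
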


\begin{proof}
a) $S_{\delta} ((\alpha_1,A_1)\cdot(\alpha_2, A_2)) = S_{\delta} ((\alpha_1,A_1))\cdot
S_{\delta} ((\alpha_2,A_2)) \Leftrightarrow$

$\Leftrightarrow S_{\delta} ((\alpha_1\alpha_2 , A_1 + \alpha_1 A_2)) = (\alpha_1, A_1\delta)\cdot(\alpha_2,
A_2\delta) \Leftrightarrow$

$\Leftrightarrow (\alpha_1\alpha_2 , (A_1 + \alpha_1 A_2)\delta) = (\alpha_1\alpha_2, A_1\delta + \alpha_1 A_2\delta) $

Analogous $S_{\delta^{-1}} $ is morfism and $S_{\delta}\cdot S_{\delta^{-1}} =
S_{\delta^{-1}} \cdot S_{\delta} = I_{\Gamma}. $

b)$\Phi (\delta_1 \cdot \delta_2 ) = \Phi (\delta_1)\cdot \Phi (\delta_2) \Leftrightarrow
S_{\delta_1 \cdot \delta_2} = S_{\delta_1}\cdot S_{\delta_2}.$

c) Assume that $S_{\delta} \in Int(\Gamma) \Rightarrow (\exists ) (\alpha_0 , A_0 )\in \Gamma$
such that

 $S_{\delta}((\alpha , A)) = (\alpha_0 , A_0)(\alpha , A)(\alpha_0, A_0)^{-1}, (\forall)
(\alpha,A)\in \Gamma.$

$\Rightarrow (\alpha , A\delta) = (\alpha_0\alpha\alpha_0^{-1} , A_0+\alpha_0 A - \alpha_0\alpha\alpha_0^{-1}A_0) \Rightarrow$

$\Rightarrow$ i) $\alpha =\alpha_0 \alpha \alpha_0^{-1} , (\forall ) \alpha \in SL_n(\mathbb{Z}) 
\Rightarrow \alpha \in \{ \pm I_n\}$

$\Rightarrow$ ii) $ A\delta = A_0 \pm A - \alpha A_0, (\forall) \alpha \in SL_n(\mathbb{Z}), (\forall ) A\in \mathcal{M}_n(\mathbb{Z}) \Rightarrow A_0 = 0_n$ and $\delta = \pm I_n$.
\[
\Rightarrow Out(\Gamma) =\raisebox{.2em}{$Aut(\Gamma)$}\left/\raisebox{-.2em}{$Int(\Gamma)$}\right.~~ is ~~infinite.
\]
\end{proof}

{\footnotesize University of Bucharest, Romania}
 
{\footnotesize e-mail: traianpr@yahoo.com}

\end{document}